\def\p{\partial}
\def\R{\mathbb{R}}
\def\vv<#1>{\langle#1\rangle}
\def\XXint#1#2{\setbox0=\hbox{$#1{#2}{\int}$}{#2}\kern-.5\wd0 }
\def\XXint#1#2#3{{\setbox0=\hbox{$#1{#2#3}{\int}$}
     \vcenter{\hbox{$#2#3$}}\kern-.5\wd0}}
\def\vv<#1>{\langle#1\rangle}
\newtheorem{thm}{Theorem}[section]
\newtheorem{prop}{Proposition}[section]
\newtheorem{cor}{Corollary}[section]
\theoremstyle{definition}
\theoremstyle{remark}
\numberwithin{equation}{section}
\begin{document}

\title{Sharp Li-Yau type gradient estimates on hyperbolic spaces}
\author{Chengjie Yu$^1$}
\address{Department of Mathematics, Shantou University, Shantou, Guangdong, 515063, China}
\email{cjyu@stu.edu.cn}
\author{Feifei Zhao}
\address{Department of Mathematics, Shantou University, Shantou, Guangdong, 515063, China}
\email{14ffzhao@stu.edu.cn}
\thanks{$^1$Research partially supported by an NSF project of China with contract no. 11571215.}

\renewcommand{\subjclassname}{%
  \textup{2010} Mathematics Subject Classification}
\subjclass[2010]{Primary 35K05; Secondary 53C44}
\date{}
\keywords{Heat equation, Li-Yau type gradient estimate, heat kernel}
\begin{abstract}
In this paper, motivated by the works of Bakry et. al in finding sharp Li-Yau type gradient estimate for positive solutions of the heat equation on complete Riemannian manifolds with nonzero Ricci curvature lower bound, we first introduce a general form of Li-Yau type gradient estimate and show that the validity of such an estimate for any positive solutions of the heat equation reduces to the validity of the estimate for the heat kernel of the Riemannian manifold.  Then, a sharp Li-Yau type gradient estimate on the three dimensional hyperbolic space is obtained by using the explicit expression of the heat kernel and some optimal Li-Yau type gradient estimates on general hyperbolic spaces are obtained.
\end{abstract}
\maketitle\markboth{Yu \& Zhao}{Li-Yau type gradient estimate}
\section{Introduction}
Let $(M^n,g)$ be a complete Riemannian manifold with Ricci curvature bounded from below by $-k$ with $k$ a nonnegative constant and $u(t,x)$ be a positive solution of the heat equation on $M$. In their seminal work \cite{LY}, Li and Yau showed that
\begin{equation}\label{eq-LY}
\|\nabla \log u\|^2-\alpha\p_t\log u\leq \frac{n\alpha^2}{2t}+\frac{n\alpha^2k}{2(\alpha-1)},
\end{equation}
for any constant $\alpha>1$. When the Ricci curvature is nonnegative, i.e. $k=0$, the Li-Yau estimate \eqref{eq-LY} gives us
\begin{equation}\label{eq-LY-0}
\|\nabla \log u\|^2-\p_t\log u\leq \frac{n}{2t}.
\end{equation}
This estimate is sharp where the equality can be achieved by the fundamental solution of  $\R^n$.

The Li-Yau gradient estimate is of fundamental importance in geometric analysis since a Harnack inequality will be immediately obtained when integrating the estimate on geodesics. For example, by \eqref{eq-LY-0}, one has the following sharp Harnack inequality for complete Riemannian manifolds with nonnegative Ricci curvature:
\begin{equation}
u(t_1,x_1)\leq \left(\frac{t_2}{t_1}\right)^\frac{n}2e^\frac{r^2(x_1,x_2)}{4(t_2-t_1)}u(t_2,x_2),
\end{equation}
for any positive solution $u$ of the heat equation, $0<t_1<t_2$ and $x_1,x_2\in M$.

Comparing to the case of $k=0$ in \eqref{eq-LY}, the Li-Yau gradient estimate \eqref{eq-LY} is not sharp in the case that $k\neq 0$. Finding sharp Li-Yau type gradient estimate for $k\neq 0$ is still an unsolved problem. In a series of works \cite{BQ,BB,BL}, Bakry and his collaborators  provided an interesting approach in finding sharp Li-Yau type gradient estimates. In their works \cite{BB}, Bakry et. al found gradient estimates in a more general form:
\begin{equation}\label{eq-BQ}
\|\nabla \log u\|^2\leq \varphi(t,\p_t\log u)
\end{equation}
with $\varphi(t,s)$ a concave function with respect to $s$. By the first order property of differentiable concave functions:
\begin{equation}
\varphi(t,s)\leq \p_s\varphi(t,s_0)(s-s_0)+\varphi(t,s_0),
\end{equation}
The estimate \eqref{eq-BQ} produces varies of Li-Yau type gradient estimate with time-dependent parameters.

More precisely, in \cite{BB}, Bakry et. al obtained the following estimates for positive solution $u$ of a complete Riemannian manifold $(M^n,g)$ with Ricci curvature lower bound $k$ where $k\neq 0$:
\begin{equation}
\frac{4}{nk}\p_t\log u<1+\frac{\pi^2}{k^2t^2}
\end{equation}
and
\begin{equation}\label{eq-BB}
\|\nabla \log u\|^2<\frac{n}{2}\Phi\left(t,\frac{4}{nk}\p_t\log u\right)
\end{equation}
where
\begin{equation}
\Phi(t,x)=\left\{\begin{array}{ll}\frac{k}{2}(x-2+2\sqrt{1-x}\coth(kt\sqrt{1-x})),&x\leq 1\\
\frac{k}{2}(x-2+2\sqrt{x-1}\cot(kt\sqrt{x-1})),&1\leq x<1+\frac{\pi^2}{k^2t^2}.
\end{array}\right.
\end{equation}
As computed in \cite{BB}, the estimate \eqref{eq-BB} is sharper than Li-Yau's gradient estimate \eqref{eq-LY}, the improvement of \eqref{eq-LY} by Davies in \cite{Da}, Hamilton's estimate in \cite{Ha} and Li-Xu's estimate in \cite{LX}.

In this paper, we further extend a Li-Yau type gradient estimate in the following form:
\begin{equation}\label{eq-BQ-g}
\varphi(t,\|\nabla\log u\|^2)\leq \psi(t,\p_t\log u)
\end{equation}
with $\varphi(t,s)$ increasing and convex with respect to $s$ and $\psi(t,s)$  concave with respect to $s$.

Note that, in \cite{Ya95}, Yau obtained the following gradient estimate:
\begin{equation}\label{eq-Yau}
\|\nabla \log u\|^2-\sqrt{2nk}\sqrt{\|\nabla \log u\|^2+\frac{n}{2t}+2nk}\leq \p_t\log u+\frac{n}{2t}.
\end{equation}
This estimate was later improved in \cite{BQ} by Bakry and Qian to the following estimate:
\begin{equation}\label{eq-BQ-nl}
\|\nabla \log u\|^2-\sqrt{nk}\sqrt{\|\nabla \log u\|^2+\frac{n}{2t}+\frac{nk}{4}}\leq \p_t\log u+\frac{n}{2t}.
\end{equation}
It is clear that the left hand sides of \eqref{eq-Yau} and \eqref{eq-BQ-nl} are both increasing and convex functions of $\|\nabla \log u\|^2$. So, the gradient estimates \eqref{eq-Yau} and \eqref{eq-BQ-nl} are both special cases of the general form \eqref{eq-BQ-g}.

Then, motivated by our previous work \cite{YZ}, we found that to show the validity of \eqref{eq-BQ-g} for any positive solutions of the heat equation, one only need to show the validity of \eqref{eq-BQ-g} for the heat kernel. More precisely, we have the following conclusion.
\begin{thm}\label{thm-general}
Let $(M^n,g)$ be a complete Riemannian manifold with Ricci curvature bounded from below and
\begin{equation}
\mathcal P(M,g)=\{u\in C^\infty(\R^+\times M)\ |\ u>0\ \mbox{and}\ \p_tu-\Delta_g u=0\}.
\end{equation}
Moreover, let $\varphi(t,s)\in C(\R^+\times [0,\infty))$ be increasing and convex with respect to $s$ and $\psi(t,s)\in C(\R^+\times \R) $ be concave with respect to $s$. Then,
\begin{equation}
\varphi(t,\|\nabla\log u\|^2)\leq \psi(t,\p_t\log u)
\end{equation}
for any $u\in \mathcal P(M,g)$ if and only if
\begin{equation}
\varphi(t,\|\nabla_x\log H(t,x,y)\|^2)\leq\psi(t,\p_t\log H(t,x,y))
\end{equation}
for any $t>0$ and $x,y\in M$, where $H(t,x,y)$ is the heat kernel of $(M,g)$.
\end{thm}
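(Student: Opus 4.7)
The plan is to prove the nontrivial direction, namely that the heat-kernel estimate implies the estimate for every positive solution of the heat equation; the reverse implication is immediate upon specializing to $u(t,x)=H(t,x,y_0)$ for a fixed $y_0\in M$. The strategy, inspired by the authors' previous work \cite{YZ}, is to represent an arbitrary positive solution as an average of heat kernels and then combine Jensen's inequality with the convexity of $\varphi$ and the concavity of $\psi$.

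Fix $u\in\mathcal P(M,g)$, a base point $(t,x)\in\R^+\times M$, and a parameter $0<s<t$; set $\tau=t-s$. The first step is the reproducing formula
\[
u(t,x)=\int_M H(\tau,x,y)\,u(s,y)\,dy,
\]
which for positive solutions on a manifold with Ricci curvature bounded below follows from stochastic completeness of the heat semigroup (for general $u$ one first truncates the initial slice as $u(s,\cdot)\chi_{B_R}$, applies the formula, and takes $R\to\infty$ via a diagonal argument). Introducing the probability measure
\[
d\nu_y=\frac{H(\tau,x,y)\,u(s,y)}{u(t,x)}\,dy
\]
and differentiating under the integral yields
\[
\p_t\log u(t,x)=\int_M\p_\tau\log H(\tau,x,y)\,d\nu_y,\qquad\nabla_x\log u(t,x)=\int_M\nabla_x\log H(\tau,x,y)\,d\nu_y,
\]
so that both the gradient and time derivative of $\log u$ are $d\nu$-averages of their heat-kernel analogues.

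The estimate then follows from a chain of three Jensen steps. Convexity of $\|\cdot\|^2$ on $T_xM$ together with the monotonicity and convexity of $\varphi(\tau,\cdot)$ gives
\[
\varphi(\tau,\|\nabla_x\log u\|^2)\le\varphi\!\left(\tau,\int\|\nabla_x\log H\|^2\,d\nu\right)\le\int\varphi(\tau,\|\nabla_x\log H\|^2)\,d\nu.
\]
The hypothesized heat-kernel estimate applied pointwise inside the integral bounds this by $\int\psi(\tau,\p_\tau\log H)\,d\nu$, and concavity of $\psi(\tau,\cdot)$ (Jensen in reverse) bounds that in turn above by $\psi(\tau,\p_t\log u)$. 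Sending $s\to 0^+$, so $\tau\to t^-$, and invoking continuity of $\varphi$ and $\psi$ in the time variable delivers $\varphi(t,\|\nabla\log u\|^2)\le\psi(t,\p_t\log u)$. The main obstacle is the reproducing formula: securing it uniformly for all $u\in\mathcal P(M,g)$ and, simultaneously, guaranteeing enough regularity of the heat kernel to justify differentiating under the integral and to pass to the limit in $s$. The Ricci lower bound enters precisely here, supplying both stochastic completeness and the local heat-kernel derivative bounds that make the Jensen chain rigorous.
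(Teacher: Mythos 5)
Your proof is correct and follows essentially the same route as the paper's: both represent the solution as a positive average of heat kernels and run the identical three-step Jensen chain (convexity of the squared norm, monotonicity and convexity of $\varphi$, the pointwise kernel estimate, concavity of $\psi$), the paper merely phrasing the first step as a symmetrized double integral with Cauchy--Schwarz applied to $u(t,x)=\int_M H(t,x,y)f(y)\,dy$ for compactly supported $f$, which keeps the kernel time equal to $t$ and so avoids your time shift $\tau=t-s$ and the limit $s\to 0^+$. The reduction to such $u$ (equivalently, your reproducing formula for general positive solutions, which really rests on a Widder-type representation under the Ricci lower bound rather than on stochastic completeness alone) is deferred by the paper to \cite{YZ}, the same outsourced approximation step you acknowledge.
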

By Theorem \ref{thm-general} and the expression of heat kernel of the three dimensional hyperbolic space $\mathbb H^3$ (see \cite{DM,Da}):
\begin{equation}\label{eq-K3}
K_3(t,r)=(4\pi t)^{-\frac 32}e^{-\frac{r^2}{4t}-t}\frac{r}{\sinh r},
\end{equation}
we are able to derive a sharp Li-Yau type gradient estimate for three dimensional hyperbolic manifolds.
\begin{thm}\label{thm-dim-3}
Let $(M^3,g)$ be a complete Riemannian manifold with constant sectional curvature $-1$. Then, for any $u\in \mathcal P(M,g)$,
\begin{equation}\label{eq-dim-3-1}
\p_t\log u+\frac{3}{2t}+1\geq 0
\end{equation}
and
\begin{equation}\label{eq-dim-3-2}
\begin{split}
\|\nabla \log u\|\leq &\sqrt{\p_t\log u+\frac{3}{2t}+1}+Z\left(2t\sqrt{\p_t\log u+\frac{3}{2t}+1}\right)
\end{split}
\end{equation}
where $Z(r)=\coth r-\frac1r$. The equality of the estimate \eqref{eq-dim-3-2} holds when $M$ is the hyperbolic space $\mathbb{H}^3$ and $u$ is the heat kernel. In particular, by noting that $0\leq Z\leq 1$, we have
\begin{equation}\label{eq-dim-3-2-s}
\begin{split}
\|\nabla \log u\|\leq &\sqrt{\p_t\log u+\frac{3}{2t}+1}+1.
\end{split}
\end{equation}
\end{thm}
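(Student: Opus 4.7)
The plan is to apply Theorem \ref{thm-general} to the heat kernel $K_3(t,r)$ of $\mathbb H^3$ given by \eqref{eq-K3}. A direct differentiation of \eqref{eq-K3} yields
\begin{equation*}
\p_t\log K_3 = -\frac{3}{2t} - 1 + \frac{r^2}{4t^2}, \qquad \|\nabla_x\log K_3\| = \frac{r}{2t} + Z(r),
\end{equation*}
so that $\p_t\log K_3 + \frac{3}{2t}+1 = \frac{r^2}{4t^2}\ge 0$ and $2t\sqrt{\p_t\log K_3 + \frac{3}{2t}+1} = r$. In particular both \eqref{eq-dim-3-1} and \eqref{eq-dim-3-2} hold with equality when $u=K_3$, which will give the sharpness assertion.

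To deduce both inequalities in general I introduce
\begin{equation*}
\varphi(t,s) = \lf[F_t^{-1}(\sqrt{s})\ri]^2, \qquad \psi(t,y)=y+\frac{3}{2t}+1, \qquad F_t(a):=a+Z(2ta).
\end{equation*}
Since $F_t\colon[0,\infty)\to[0,\infty)$ is a continuous increasing bijection, $\varphi$ is continuous and increasing in $s$, while $\psi$ is linear (hence concave) in $y$. The calculation above gives $\varphi(t,\|\nabla_x\log K_3\|^2)=r^2/(4t^2)=\psi(t,\p_t\log K_3)$, so equality holds for the heat kernel. Granting that $\varphi(t,\cdot)$ is convex, Theorem \ref{thm-general} yields $[F_t^{-1}(\|\nabla\log u\|)]^2\le\p_t\log u+\frac{3}{2t}+1$ for every $u\in\mathcal P(M,g)$, which immediately gives \eqref{eq-dim-3-1} and, by taking square roots and applying the increasing map $F_t$, also gives \eqref{eq-dim-3-2}.

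The main obstacle is the convexity of $\varphi(t,\cdot)$. Setting $a = F_t^{-1}(\sqrt s)$ and $v=2ta$, a routine differentiation reduces $\p_s^2\varphi\ge 0$ to the single scalar inequality
\begin{equation*}
(1+2tZ'(v))\lf(Z(v)-vZ'(v)\ri)\ge v\lf(v+2tZ(v)\ri)Z''(v), \qquad v>0.
\end{equation*}
I expect this to follow from the concavity of the Langevin function $Z(v)=\coth v - 1/v$, i.e.\ $Z''(v)\le 0$ on $(0,\infty)$. The latter is equivalent to the elementary inequality $\sinh^3 v\ge v^3\cosh v$, which I would prove by Taylor comparison near $0$ combined with an asymptotic/monotonicity argument after differentiation. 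Once $Z''\le 0$ is in hand, $Z(v)-vZ'(v)\ge 0$ follows from $Z(0)=0$ together with concavity (the map $v\mapsto Z(v)/v$ is non-increasing), and then the left-hand side of the displayed inequality is non-negative while the right-hand side is non-positive. Finally, \eqref{eq-dim-3-2-s} is immediate from \eqref{eq-dim-3-2} and the bound $0\le Z\le 1$.
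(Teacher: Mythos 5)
Your argument is correct and follows the same overall strategy as the paper --- apply Theorem \ref{thm-general} to the explicit heat kernel \eqref{eq-K3} and reduce everything to a single convexity/concavity check --- but you distribute the nonlinearity differently and verify the key inequality by a different route. The paper takes $\varphi(t,s)=s$ and puts all the nonlinearity into a concave $\psi$, i.e.\ it shows that $Y=\|\nabla\log K_3\|^2$ is a concave function of $X=\partial_t\log K_3$; you instead take $\psi$ linear and push a convex nonlinearity $\varphi(t,s)=[F_t^{-1}(\sqrt s)]^2$ onto the gradient side. Since $Y$ is an increasing function of $X$, the two conditions are equivalent (concavity of $Y(X)$ versus convexity of $X(Y)$), and indeed your scalar inequality $(1+2tZ')(Z-vZ')\ge v(v+2tZ)Z''$ is, upon expansion, exactly the negative of the quantity $\bigl(r^2Z''+rZ'-Z\bigr)+2t\bigl(rZZ''+r(Z')^2-ZZ'\bigr)$ that the paper must show is nonpositive in \eqref{eq-YX}. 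The difference lies in how this is established: the paper proves each of the two brackets is $\le 0$ separately by expanding into power series and checking coefficients, whereas you regroup the expression so that everything follows from the two facts $Z''\le 0$ and $Z(v)\ge vZ'(v)$, the latter being a consequence of the former together with $Z(0)=0$. This is a tidier reduction, isolating the concavity of the Langevin function as the single analytic input. The one soft spot is that you only sketch the proof of $Z''\le 0$, i.e.\ of $\sinh^3 v\ge v^3\cosh v$; this is true and elementary (for instance, writing $\sinh^3 v=\frac14(\sinh 3v-3\sinh v)$ reduces it to the coefficientwise inequality $3^{2k+3}-3\ge 4(2k+1)(2k+2)(2k+3)$, which holds with equality for $k=0,1$), but it must be carried out to make the proof complete. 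The derivation of \eqref{eq-dim-3-1} from the nonnegativity of $\varphi$, the equality case for the heat kernel, and the passage to \eqref{eq-dim-3-2-s} via $0\le Z\le 1$ all match the paper.
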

Furthermore, by using the first order property of concave functions, one has the following linearization of \eqref{eq-dim-3-2}:
\begin{equation}
\begin{split}
\|\nabla \log u\|^2\leq& \left(1+2\left(Z'(r_0)+\frac{Z(r_0)}{r_0}\right)t+\frac{4Z(r_0)Z'(r_0)}{r_0}t^2\right)\left(\p_t\log u+\frac{3}{2t}+1-\frac{r_0^2}{4t^2}\right)\\
&+\left(\frac{r_0}{2t}+Z(r_0)\right)^2
\end{split}
\end{equation}
for any $r_0\geq 0$. In particular, when $r_0=0$, one has
\begin{equation}
\|\nabla \log u\|^2\leq \left(1+\frac{2}{3}t\right)^2\left(\p_t\log u+\frac{3}{2t}+1\right).
\end{equation}

We believe that there is a similar result as in Theorem \ref{thm-dim-3} for arbitrary dimensional hyperbolic manifolds. However, because the expressions of the heat kernels are much more involved, one can not write them in such an explicit form as in \eqref{eq-dim-3-2}. We will discuss this in another place. Instead, by using the estimates in \cite{YZ}, we are able to extend \eqref{eq-dim-3-2-s} to general hyperbolic manifolds.
\begin{thm}\label{thm-general-h}
Let $(M^n,g)$ be a complete Riemannian manifold with constant sectional curvature $-1$. Then,
\begin{enumerate}
\item when $n$ is odd,
\begin{equation}\label{eq-odd-1}
\p_t\log u+\frac{n}{2t}+\frac{(n-1)^2}{4}\geq 0
\end{equation}
and
\begin{equation}\label{eq-odd-2}
\|\nabla\log u\|\leq \sqrt{\p_t\log u+\frac{n}{2t}+\frac{(n-1)^2}{4}}+\frac{n-1}{2}
\end{equation}
for any $u\in \mathcal P(M,g)$;
\item when $n$ is even,
\begin{equation}\label{eq-even-1}
\p_t\log u+\frac{n+1}{2t}+\frac{(n-1)^2}{4}\geq 0
\end{equation}
and
\begin{equation}\label{eq-even-2}
\|\nabla\log u\|\leq \sqrt{\p_t\log u+\frac{n+1}{2t}+\frac{(n-1)^2}{4}}+\frac{n-1}{2}
\end{equation}
for any $u\in \mathcal P(M,g)$;
\end{enumerate}
\end{thm}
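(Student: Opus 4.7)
The plan is to apply Theorem \ref{thm-general} to reduce both the odd and even cases to inequalities for the heat kernel $K_n(t,r)$ of $\mathbb{H}^n$. Squaring \eqref{eq-odd-2}, under the nonnegativity \eqref{eq-odd-1}, puts it in the form
\begin{equation*}
\bigl(\max\{\|\nabla \log u\|-\tfrac{n-1}{2},\,0\}\bigr)^2 \leq \p_t \log u + \tfrac{n}{2t}+\tfrac{(n-1)^2}{4}.
\end{equation*}
As a function of $s=\|\nabla \log u\|^2$, the left side $(\max\{\sqrt{s}-(n-1)/2,0\})^2$ is continuous, nondecreasing, and convex on $[0,\infty)$, while the right side is affine (hence concave) in $\p_t \log u$. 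Thus Theorem \ref{thm-general} reduces \eqref{eq-odd-1}--\eqref{eq-odd-2} to the same pair of inequalities for $K_n$, and \eqref{eq-even-1}--\eqref{eq-even-2} are reduced in the same manner.

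For the odd case $n=2m+1$, I would invoke the classical descent formula
\begin{equation*}
K_n(t,r) = \frac{(-1)^m}{(2\pi)^m (4\pi t)^{1/2}}\left(\frac{1}{\sinh r}\p_r\right)^m e^{-m^2 t - r^2/(4t)},
\end{equation*}
and write it as $K_n(t,r)=(4\pi t)^{-n/2}e^{-m^2 t - r^2/(4t)}F_m(t,r)$. Using $(n-1)/2=m$, the two inequalities to be proved unwind into
\begin{equation*}
\p_t \log F_m(t,r) \geq -\tfrac{r^2}{4t^2}, \qquad \bigl(\max\{|\tfrac{r}{2t}-\p_r \log F_m|-m,\,0\}\bigr)^2 \leq \tfrac{r^2}{4t^2}+\p_t \log F_m,
\end{equation*}
and both follow from the pair of pointwise bounds $|\p_r \log F_m|\leq m$ and $\p_t \log F_m\geq 0$ that are the content of the authors' prior paper \cite{YZ}. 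When $m=1$, $F_1(t,r)=r/\sinh r$ is $t$-independent and $\p_r \log F_1=-Z(r)\in[-1,0]$, so this step recovers \eqref{eq-dim-3-2} sharply and, upon using $Z\leq 1$, yields \eqref{eq-dim-3-2-s}.

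For the even case $n=2m$, I would descend from dimension $2m+1$ through the Millson-type integral representation
\begin{equation*}
K_{2m}(t,r) = c_m\int_r^\infty \frac{\sinh s}{\sqrt{\cosh s-\cosh r}}\,K_{2m+1}(t,s)\,ds
\end{equation*}
(with $c_m$ a dimensional constant). The extra integration contributes an effective $\sqrt{t}$ factor to the normalisation, since $s-r$ is of order $\sqrt{t}$ near the diagonal, and this accounts for the shift $\tfrac{n}{2t}\to\tfrac{n+1}{2t}$ in \eqref{eq-even-1}--\eqref{eq-even-2}; differentiating under the integral and inserting the odd-dimensional bounds above yields the even-dimensional inequalities. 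Equivalently, the corresponding estimates on $F_m$ proved in \cite{YZ} for even $n$ can be substituted directly.

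The main obstacle is establishing the pointwise bound $|\p_r \log F_m|\leq (n-1)/2$ used in the second paragraph. Expanding the iterated operator $(\tfrac{1}{\sinh r}\p_r)^m$ acting on $e^{-r^2/(4t)}$ produces many terms of mixed polynomial/hyperbolic type, and the cancellations that deliver a uniform, $t$- and $r$-independent bound by $m=(n-1)/2$ encode the specific geometry of $\mathbb{H}^n$; this is the heart of the calculation borrowed from \cite{YZ}, after which the passage to the stated Li--Yau type estimates is a routine algebraic unpacking.
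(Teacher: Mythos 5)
Your proposal is correct and follows essentially the same route as the paper: reduce to the heat kernel of $\mathbb{H}^n$ via Theorem \ref{thm-general}, then feed in the bounds $0\leq-\p_r\log\alpha_n\leq\frac{n-1}{2}$, $\p_t\alpha_n\geq0$ (odd $n$) and $\p_t(t^{1/2}\alpha_n)\geq0$ (even $n$) from \cite{DM,YZ}, which are exactly your bounds on $F_m$. The only cosmetic difference is that you place the nonlinearity in the convex increasing $\varphi(t,s)=\bigl(\max\{\sqrt{s}-\tfrac{n-1}{2},0\}\bigr)^2$ with $\psi$ affine, whereas the paper takes $\varphi(t,s)=s$ and the concave $\psi(t,s)=\bigl(\sqrt{s+\tfrac{n}{2t}+\tfrac{(n-1)^2}{4}}+\tfrac{n-1}{2}\bigr)^2$; these are equivalent ways of casting the same estimate into the form required by Theorem \ref{thm-general}.
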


Note that \eqref{eq-odd-1} and \eqref{eq-even-1} were shown in \cite{YZ}, and as mentioned in \cite{YZ}, the estimate \eqref{eq-odd-1} is not true for the hyperbolic plane. Moreover, note that  \eqref{eq-odd-2} and \eqref{eq-even-2} are sharper the optimal Li-Yau type gradient estimates in \cite{YZ}. Furthermore, by using a classical argument similarly as in \cite{LY} and the estimates \eqref{eq-odd-2} and \eqref{eq-even-2}, one can obtain the same Harnack inequalities as in \cite{YZ}:
\begin{equation}
u(t_1,x_1)\leq \left(\frac{t_2}{t_1}\right)^\frac n2 \exp\left({\frac{r^2(x_1,x_2)}{4(t_2-t_1)}+\frac{(n-1)^2}{4}(t_2-t_1)+\frac{n-1}{2}r(x_1,x_2)}\right)u(t_2,x_2)
\end{equation}
for any $0<t_1<t_2$ and $x_1,x_2\in M$ with $(M^n,g)$ an odd dimensional complete Riemannian manifold with constant sectional curvature $-1$,  and
\begin{equation}
u(t_1,x_1)\leq \left(\frac{t_2}{t_1}\right)^\frac {n+1}2 \exp\left({\frac{r^2(x_1,x_2)}{4(t_2-t_1)}+\frac{(n-1)^2}{4}(t_2-t_1)+\frac{n-1}{2}r(x_1,x_2)}\right)u(t_2,x_2)
\end{equation}
for any $0<t_1<t_2$ and $x_1,x_2\in M$ with $(M^n,g)$ an even dimensional complete Riemannian manifold with constant sectional curvature $-1$. Here $r(x_1,x_2)$ means the distance between $x_1$ and $x_2$ and $u\in \mathcal P(M,g)$.

There is a rich literature in extending the Li-Yau gradient estimate in different settings or in different forms. See for examples \cite{Ca,CN,CH,NN,Ha2,Le,Pe,Qi,Ya94,ZZ0,ZZ,ZZh}.

The rest of this pare is organized as follows: in Section 2, we prove Theorem \ref{thm-general}, in Section 3, we prove Theorem \ref{thm-dim-3} and its linearization, and finally in Section 4, we prove Theorem \ref{thm-general-h} and some of its corollaries.

\section{Li-Yau type gradient estimate in a general form}
In this section, we prove Theorem \ref{thm-general}. First of all, we have the following finite sum version of Theorem \ref{thm-general}.
\begin{prop}
Let $(M^n,g)$ be a complete Riemannian manifold and $u,v\in \mathcal P(M,g)$. Moreover, let $\varphi(t,s)\in C(\R^+\times [0,+\infty))$ be increasing and convex with respect to $s$ and $\psi(t,s)\in C(\R^+\times \R)$ be concave with respect to $s$. Suppose that
\begin{equation}
\varphi(t,\|\nabla \log u\|^2)\leq \psi(t,\p_t\log u)
\end{equation}
and
\begin{equation}
\varphi(t,\|\nabla \log v\|^2)\leq \psi(t,\p_t\log v).
\end{equation}
Then,
\begin{equation}
\varphi(t,\|\nabla \log (u+v)\|^2)\leq \psi(t,\p_t\log(u+v)).
\end{equation}
\end{prop}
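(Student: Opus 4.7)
The plan is to exploit the simple linear-combination structure of $\log(u+v)$. Setting $w = u+v$ and $\lambda = u/w \in (0,1)$, so that $1-\lambda = v/w$, the identities $\partial_t w = u\,\partial_t\log u + v\,\partial_t\log v$ and $\nabla w = u\,\nabla\log u + v\,\nabla\log v$ immediately give the convex combinations
\begin{equation*}
\p_t\log w = \lambda\,\p_t\log u + (1-\lambda)\,\p_t\log v,
\qquad
\nabla\log w = \lambda\,\nabla\log u + (1-\lambda)\,\nabla\log v.
\end{equation*}
These identities are the only place where the structure of positive solutions of the heat equation enters the proof; the rest is purely convex-analytic.

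Next I would estimate $\|\nabla\log w\|^2$ using the triangle inequality followed by the convexity of $x\mapsto x^2$:
\begin{equation*}
\|\nabla\log w\|^2 \leq \bigl(\lambda\|\nabla\log u\| + (1-\lambda)\|\nabla\log v\|\bigr)^2 \leq \lambda\|\nabla\log u\|^2 + (1-\lambda)\|\nabla\log v\|^2.
\end{equation*}
Now I chain the three hypotheses in the natural order. First, because $\varphi(t,\cdot)$ is increasing and then convex, the previous display yields
\begin{equation*}
\varphi(t,\|\nabla\log w\|^2) \leq \lambda\,\varphi(t,\|\nabla\log u\|^2) + (1-\lambda)\,\varphi(t,\|\nabla\log v\|^2).
\end{equation*}
Applying the two assumed gradient estimates for $u$ and $v$ bounds the right-hand side by $\lambda\,\psi(t,\p_t\log u) + (1-\lambda)\,\psi(t,\p_t\log v)$, and finally the concavity of $\psi(t,\cdot)$ together with the linear-combination identity for $\p_t\log w$ produces the desired bound $\psi(t,\p_t\log w)$.

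There is no real obstacle here; the only subtlety worth flagging is that one must use monotonicity of $\varphi$ in $s$ \emph{before} invoking its convexity, since the intermediate inequality $\|\nabla\log w\|^2 \leq \lambda\|\nabla\log u\|^2 + (1-\lambda)\|\nabla\log v\|^2$ is only a one-sided bound on the argument. The three hypotheses on $\varphi$ and $\psi$ — monotonicity, convexity, concavity — are each used exactly once and at the correct step, which is what makes the general form \eqref{eq-BQ-g} the natural setting for this summation lemma.
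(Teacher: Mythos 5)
Your proposal is correct and follows essentially the same route as the paper: write $\nabla\log(u+v)$ and $\p_t\log(u+v)$ as convex combinations with weights $u/(u+v)$ and $v/(u+v)$, bound $\|\nabla\log(u+v)\|^2$ by the corresponding convex combination of $\|\nabla\log u\|^2$ and $\|\nabla\log v\|^2$, then apply monotonicity of $\varphi$, convexity of $\varphi$, the two hypotheses, and concavity of $\psi$ in that order. The only cosmetic difference is that you obtain the intermediate bound via the triangle inequality plus convexity of $x\mapsto x^2$, while the paper expands the square and uses $2\vv<\nabla\log u,\nabla\log v>\leq\|\nabla\log u\|^2+\|\nabla\log v\|^2$; these yield the identical estimate.
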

\begin{proof}By the assumptions,
\begin{equation}
\begin{split}
&\varphi(t,\|\nabla \log (u+v)\|^2)\\
=&\varphi\left(t,\frac{\|\nabla \log u\|^2u^2+\|\nabla\log v\|^2v^2+2\vv<\nabla\log u,\nabla \log v>uv}{(u+v)^2}\right)\\
\leq&\varphi\left(t,\frac{\|\nabla \log u\|^2u^2+\|\nabla\log v\|^2v^2+(\|\nabla\log u\|^2+\|\nabla \log v\|^2)uv}{(u+v)^2}\right)\\
\leq&\frac{u}{u+v}\varphi(t,\|\nabla\log u\|^2)+\frac{v}{u+v}\varphi(t,\|\nabla\log v\|^2)\\
\leq&\frac{u}{u+v}\psi(t,\p_t\log u)+\frac{v}{u+v}\psi(t,\p_t\log v)\\
\leq&\psi\left(t,\frac{u\p_t\log u+v\p_t\log v}{u+v}\right)\\
=&\psi(t,\p_t\log (u+v)).
\end{split}
\end{equation}
\end{proof}

Next, we come to prove Theorem \ref{thm-general}.
\begin{proof}[Proof of Theorem \ref{thm-general}.]
The only if part is clear. For the if part, similarly as in \cite{YZ}, we only need to verify that
\begin{equation}
\varphi(t,\|\nabla \log u\|^2)\leq \psi(t,\p_t\log u)
\end{equation}
for any
\begin{equation}
u(t,x)=\int_M H(t,x,y)f(y)dy
\end{equation}
with $f$ an arbitrary  nonnegative smooth function with compact support. Then,
\begin{equation}
\begin{split}
&\varphi(t,\|\nabla \log u\|^2)\\
=&\varphi\left(t,\left(\int_M H(t,x,y) f(y)dy\right)^{-2}\left\|\int_M \nabla_xH(t,x,y) f(y)dy\right\|^2\right)\\
=&\varphi\Bigg(t,\left(\int_M\int_M H(t,x,y)H(t,x,z)f(y)f(z)dydz\right)^{-1}\times\\
&\int_M \int_M\vv<\nabla_x\log H(t,x,y),\nabla_x\log H(t,x,z)> H(t,x,y)H(t,x,z)f(y)f(z) dydz\Bigg)\\
\leq&\varphi\Bigg(t,\left(\int_M\int_M H(t,x,y)H(t,x,z)f(y)f(z)dydz\right)^{-1}\times\\
&\frac12\int_M \int_M\left(\|\nabla_x\log H(t,x,y)\|^2+\|\nabla_x\log H(t,x,z)\|^2\right) H(t,x,y)H(t,x,z)f(y)f(z) dydz\Bigg)\\
=&\frac12\left(\int_M\int_M H(t,x,y)H(t,x,z)f(y)f(z)dydz\right)^{-1}\times\int_M \int_M\big(\varphi(t,\|\nabla_x\log H(t,x,y)\|^2)+\\
&\varphi(t,\|\nabla_x\log H(t,x,z)\|^2)\big) H(t,x,y)H(t,x,z)f(y)f(z) dydz\\
\leq&\frac12\left(\int_M\int_M H(t,x,y)H(t,x,z)f(y)f(z)dydz\right)^{-1}\times\int_M \int_M\big(\psi(t,\p_t\log H(t,x,y))+\\
&\psi(t,\p_t\log H(t,x,z)\big) H(t,x,y)H(t,x,z)f(y)f(z) dydz\\
\leq&\frac{1}{2}\psi\left(t,\frac{\int_M\int_M\p_t\log H(t,x,y)H(t,x,y)H(t,x,z)f(y)f(z) dydz}{\int_M\int_M H(t,x,y)H(t,x,z)f(y)f(z)dydz}\right)+\\
&\frac{1}{2}\psi\left(t,\frac{\int_M\int_M\p_t\log H(t,x,z)H(t,x,y)H(t,x,z)f(y)f(z) dydz}{\int_M\int_M H(t,x,y)H(t,x,z)f(y)f(z)dydz}\right)\\
=&\psi(t,\p_t\log u).
\end{split}
\end{equation}
This completes the proof of the theorem.
\end{proof}
\section{A sharp Li-Yau type gradient estimate on $\mathbb{H}^3$}
In this section, with the help of Theorem \ref{thm-general}, we prove Theorem \ref{thm-dim-3} and its linearization.
\begin{proof}[Proof of Theorem \ref{thm-dim-3}]
By taking universal cover, we only need to prove the theorem for $\mathbb H^3$. By \eqref{eq-K3},  we have
\begin{equation}\label{eq-Y}
Y:=\|\nabla\log K_3\|^2=\left(\frac{r}{2t}+Z(r) \right)^2
\end{equation}
and
\begin{equation}\label{eq-X}
X:=\p_t\log K_3=-\frac{3}{2t}+\frac{r^2}{4t^2}-1.
\end{equation}
By the last equation, we have
\begin{equation}
\p_t\log K_3\geq-\frac{3}{2t}-1.
\end{equation}
Then, by applying theorem \ref{thm-general} with $\varphi(t,s)=-\frac{3}{2t}-1$ and $\psi(t,s)=s$, we get \eqref{eq-dim-3-1}.

By substituting \eqref{eq-X} into \eqref{eq-Y}, we know that $Y$ is a function of $X$. To prove \eqref{eq-dim-3-2}, by Theorem \ref{thm-general}, we only need to show the function is concave. Note that
\begin{equation}\label{eq-YX}
\begin{split}
\frac{d^2Y}{dX^2}=&16t^4\frac{d^2Y}{(dr^2)^2}\\
=&\frac{4t^3}{r^3}
\left({r^2Z''(r)}+{rZ'(r)}-{Z(r)}\right)+\frac{8t^4}{r^3}\left({rZ(r)Z''(r)+rZ'(r)^2-Z(r)Z'(r)}\right),\\
\end{split}
\end{equation}
and
\begin{equation}\label{eq-YX-1}
\begin{split}
&{r^2Z''(r)}+{rZ'(r)}-{Z(r)}\\
=&\frac{1}{\sinh^3r}(2r^2\cosh r-r\sinh r-\cosh r\sinh^2 r)\\
=&\frac{1}{\sinh^3r}\left(\sum_{k=0}^{\infty}\frac{2}{(2k)!}r^{2k+2}-\sum_{k=1}^\infty\frac{1}{(2k-1)!}r^{2k}-\frac{1}{4}(\cosh(3r)-\cosh(r))\right)\\
=&\frac{1}{\sinh^3r}\left(\sum_{k=0}^{\infty}\frac{2}{(2k)!}r^{2k+2}-\sum_{k=1}^\infty\frac{1}{(2k-1)!}r^{2k}-\frac{1}{4}\sum_{k=0}^\infty\frac{(9^k-1)}{(2k)!}r^{2k}\right)\\
=&\frac{1}{\sinh^3r}\sum_{k=3}^\infty\left(\frac{2}{(2k-2)!}-\frac{1}{(2k-1)!}-\frac{(9^k-1)}{4(2k)!}\right)r^{2k}\\
=&\frac{1}{\sinh^3r}\sum_{k=3}^\infty\frac{(32k^2-24k+1)-9^k}{4(2k)!}r^{2k}\\
\leq&0
\end{split}
\end{equation}
since
\begin{equation}
(32k^2-24k+1)-9^k\leq (32k^2-24k+1)-81k^2<0
\end{equation}
when $k\geq 3$.

Furthermore,
\begin{equation}\label{eq-YX-2}
\begin{split}
&rZ(r)Z''(r)+rZ'(r)^2-Z(r)Z'(r)\\
=&\frac{1}{r^3\sinh^4r}(4\sinh^4r+2r^4\cosh^2r+r^4-3r\cosh r\sinh^3r-3r^2\sinh^2r-r^3\sinh r\cosh r)
\end{split}
\end{equation}
and
\begin{equation}
\begin{split}
&4\sinh^4r+2r^4\cosh^2r+r^4-3r\cosh r\sinh^3r-3r^2\sinh^2r-r^3\sinh r\cosh r\\
=&\frac12\cosh(4r)-2\cosh(2r)+\frac32+r^4(\cosh(2r)+1)+r^4-3r\left(\frac18\sinh(4r)-\frac14\sinh(2r)\right)\\
&-\frac{3r^2}{2}(\cosh(2r)-1)-\frac{r^3}{2}\sinh(2r)\\
=&\sum_{k=5}^\infty\frac{2^{2k-3}(-(3k-8)2^{2k-1}+8k^4-28k^3+16k^2+4k-16)}{(2k)!}r^{2k}\\
\leq&0
\end{split}
\end{equation}
since
\begin{equation}
\begin{split}
-(3k-8)2^{2k-1}+8k^4-28k^3+16k^2+4k-16\leq-10\times 2^{2k-1}+8k^4<0
\end{split}
\end{equation}
when $k\geq 6$ and
 \begin{equation}
\begin{split}
-(3k-8)2^{2k-1}+8k^4-28k^3+16k^2+4k-16|_{k=5}=-1680<0.
\end{split}
\end{equation}
 We complete the proof of the theorem by substituting \eqref{eq-YX-1} and \eqref{eq-YX-2} in to \eqref{eq-YX}. The fact that $0\leq Z\leq 1$ comes from \cite[Proposition 3.2]{YZ}.
\end{proof}

By using the first order property of differentiable concave functions, we have the following linearization of \eqref{eq-dim-3-2}.
\begin{thm}
Let $(M^3,g)$ be a complete Riemannian manifold with constant sectional curvature $-1$, and $u\in \mathcal P(M,g)$. Then,
\begin{equation}\label{eq-linear}
\begin{split}
\|\nabla \log u\|^2\leq& \left(1+2\left(Z'(r_0)+\frac{Z(r_0)}{r_0}\right)t+\frac{4Z(r_0)Z'(r_0)}{r_0}t^2\right)\left(\p_t\log u+\frac{3}{2t}+1-\frac{r_0^2}{4t^2}\right)\\
&+\left(\frac{r_0}{2t}+Z(r_0)\right)^2
\end{split}
\end{equation}
for any $r_0\geq 0$, where $Z(r)=\coth r-\frac 1r$. In particular, when $r_0=0$, we have
\begin{equation}\label{eq-linear-0}
\|\nabla \log u\|^2\leq \left(1+\frac{2}{3}t\right)^2\left(\p_t\log u+\frac{3}{2t}+1\right).
\end{equation}
\end{thm}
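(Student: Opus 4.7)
The plan is to derive \eqref{eq-linear} as the tangent-line inequality for the sharp concave estimate proved in Theorem \ref{thm-dim-3}, and then promote it from the heat kernel to arbitrary positive solutions via Theorem \ref{thm-general}. Recall from the proof of Theorem \ref{thm-dim-3} that on $\mathbb H^3$, with $Y:=\|\nabla\log K_3\|^2=\left(\tfrac{r}{2t}+Z(r)\right)^2$ and $X:=\p_t\log K_3=-\tfrac{3}{2t}+\tfrac{r^2}{4t^2}-1$, the implicit relation $Y=F(t,X)$ defines a function $F$ which is concave in $X$ (this is the content of \eqref{eq-YX}--\eqref{eq-YX-2}). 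Hence, for any fixed $t>0$ and any $r_0\geq 0$, setting $X_0=-\tfrac{3}{2t}+\tfrac{r_0^2}{4t^2}-1$ and $Y_0=\left(\tfrac{r_0}{2t}+Z(r_0)\right)^2$, the first-order property of differentiable concave functions yields
\[
Y\;\leq\;Y_0+\frac{dY}{dX}\bigg|_{r=r_0}(X-X_0).
\]

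Next I would compute the slope by the chain rule. From $\tfrac{dX}{dr}=\tfrac{r}{2t^2}$ and $\tfrac{dY}{dr}=2\left(\tfrac{r}{2t}+Z(r)\right)\left(\tfrac{1}{2t}+Z'(r)\right)$, a direct expansion gives
\[
\frac{dY}{dX}\bigg|_{r=r_0}=1+2t\!\left(Z'(r_0)+\frac{Z(r_0)}{r_0}\right)+\frac{4Z(r_0)Z'(r_0)}{r_0}t^{2},
\]
which is precisely the coefficient appearing in \eqref{eq-linear}. Substituting this slope together with $X_0$ and $Y_0$ produces \eqref{eq-linear} for $u=K_3$. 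I would then invoke Theorem \ref{thm-general} with $\varphi(t,s)=s$ (increasing and convex in $s$) and $\psi(t,s)$ the affine function of $s$ standing on the right-hand side of \eqref{eq-linear} (automatically concave in $s$); this upgrades the estimate from $K_3$ to every $u\in\mathcal P(M,g)$, first on $\mathbb H^3$ and then on an arbitrary complete $3$-manifold of constant curvature $-1$ by lifting to the universal cover.

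Finally, the special case $r_0=0$ is obtained by taking $r_0\to 0^+$ in \eqref{eq-linear}. From the expansion $Z(r)=\tfrac{r}{3}-\tfrac{r^3}{45}+O(r^5)$ one reads off $Z'(0)=\tfrac13$ and $\lim_{r_0\to 0}Z(r_0)/r_0=\tfrac13$, so $Z'(r_0)+Z(r_0)/r_0\to\tfrac23$ and $4Z(r_0)Z'(r_0)/r_0\to\tfrac49$; the coefficient therefore collapses to $1+\tfrac{4}{3}t+\tfrac{4}{9}t^{2}=\left(1+\tfrac{2}{3}t\right)^{\!2}$, while $r_0^{2}/(4t^{2})$ and $\left(\tfrac{r_0}{2t}+Z(r_0)\right)^{2}$ both vanish, recovering \eqref{eq-linear-0}. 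There is no genuine obstacle in the argument: all the analytic content (concavity of $Y$ in $X$ and the kernel-to-solution reduction) has been supplied by Theorems \ref{thm-dim-3} and \ref{thm-general}, and what remains is careful bookkeeping in the chain-rule computation and the elementary $r_0\to 0^+$ limit.
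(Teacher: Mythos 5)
Your proposal is correct and follows essentially the same route as the paper: both take the tangent line to the concave function $Y(t,X)$ established in the proof of Theorem \ref{thm-dim-3}, compute the slope $\frac{dY}{dX}$ by the chain rule, and pass to general $u$ (the paper by combining with the already-proved \eqref{eq-dim-3-2}, you by re-invoking Theorem \ref{thm-general} with the affine $\psi$ --- an equivalent step). The $r_0=0$ case is likewise handled identically via $Z(0)=0$, $Z'(0)=\tfrac13$.
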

\begin{proof} Let the notations be the same as in the proof of Theorem \ref{thm-dim-3}. Then
\begin{equation}
\begin{split}
\frac{dY}{dX}=&2t^2\frac{dY}{rdr}=1+2\left(Z'(r)+\frac{Z(r)}{r}\right)t+\frac{4Z(r)Z'(r)}{r}t^2.
\end{split}
\end{equation}
Since $Y$ is a concave function of $X$, we have
\begin{equation}
\begin{split}
&Y(t,X)\\
\leq&\frac{dY}{dX}(X-X_0)+Y(t, X_0)\\
=& \left(1+2\left(Z'(r_0)+\frac{Z(r_0)}{r_0}\right)t+\frac{4Z(r_0)Z'(r_0)}{r_0}t^2\right)\left(X+\frac{3}{2t}+1-\frac{r_0^2}{4t^2}\right)+\left(\frac{r_0}{2t}+Z(r_0)\right)^2.
\end{split}
\end{equation}
where $X_0=-\frac{3}{2t}+\frac{r_0^2}{4t^2}-1$. Combining this and \eqref{eq-dim-3-2}, we get \eqref{eq-linear}.

When $r_0=0$, noting that $Z(0)=0$ and $Z'(0)=\frac13$, we get \eqref{eq-linear-0}.
\end{proof}
\section{Optimal Li-Yau type gradient estimates on general hyperbolic spaces}
Before proving Theorem \ref{thm-general-h}, recall some facts of the heat kernels of hyperbolic spaces. The same as in \cite{DM}, write the heat kernel $K_n(t,r(x,y))$ of $\mathbb H^n$ in the form:
\begin{equation}\label{eq-Kn}
K_n(t,r)=(4\pi t)^{-\frac{n}2}e^{-\frac{r^2}{4t}-\frac{(n-1)^2}{4}t}\alpha_n(t,r).
\end{equation}
Then, as shown in \cite{DM,YZ}, we have
\begin{equation}\label{eq-odd-pt}
\p_t\alpha_n\geq 0
\end{equation}
when $n$ is odd,
\begin{equation}\label{eq-even-pt}
\p_t(t^\frac12\alpha_n)\geq 0
\end{equation}
when $n$ is even. Moreover,
\begin{equation}\label{eq-pr}
0\leq -\p_r\log\alpha_n\leq \frac{n-1}{2}.
\end{equation}
\begin{proof}[Proof of Theorem \ref{thm-general-h}] By taking universal cover, we only need to prove the theorem for the hyperbolic space $\mathbb H^n$. Moreover, we only need to prove the odd dimensional case. The proof of the even dimensional case is similar by using \eqref{eq-even-pt} and \eqref{eq-pr}.

By \eqref{eq-Kn} and \eqref{eq-pr},
\begin{equation}\label{eq-nabla}
\|\nabla\log K_n\|^2=\left(\frac{r}{2t}-\p_r\log \alpha_n\right)^2\leq \left(\frac{r}{2t}+\frac{n-1}{2}\right)^2.
\end{equation}
Moreover, by \eqref{eq-Kn} and \eqref{eq-odd-pt},
\begin{equation}\label{eq-pt}
\p_t\log K_n=-\frac{n}{2t}-\frac{(n-1)^2}{4}+\frac{r^2}{4t^2}+\p_t\log \alpha_n\geq-\frac{n}{2t}-\frac{(n-1)^2}{4}+\frac{r^2}{4t^2}.
\end{equation}
Thus,
\begin{equation}
\p_t\log K_n\geq -\frac{n}{2t}-\frac{(n-1)^2}{4}.
\end{equation}
By applying Theorem \ref{thm-general} with $\varphi(t,s)=-\frac{n}{2t}-\frac{(n-1)^2}{4}$ and $\psi(t,s)=s$, we get \eqref{eq-odd-1}. Moreover, by \eqref{eq-nabla} and \eqref{eq-pt},
\begin{equation}
\|\nabla \log K_n\|^2\leq \left(\sqrt{\p_t\log K_n +\frac{n}{2t}+\frac{(n-1)^2}{4}}+\frac{n-1}{2}\right)^2.
\end{equation}
Noting that $\psi(t,s)=\left(\sqrt{s+\frac{n}{2t}+\frac{(n-1)^2}{4}}+\frac{n-1}{2}\right)^2$ is a concave function of $s$, we get \eqref{eq-odd-2} by Theorem \ref{thm-general}.
\end{proof}
From Theorem \ref{thm-general-h}, we can obtain the optimal Li-Yau type gradient estimate in \cite{YZ} for hyperbolic manifolds.
\begin{cor}
Let $(M^n,g)$ be a complete Riemannian manifold with constant sectional curvature $-1$. Then,
\begin{enumerate}
\item when $n$ is odd,
\begin{equation}
\beta\|\nabla \log u\|^2-\p_t\log u\leq\frac{n}{2t}+\frac{(n-1)^2}{4(1-\beta)}
\end{equation}
for any $\beta\in [0,1)$ and $u\in \mathcal P(M,g)$;
\item when $n$ is even,
\begin{equation}
\beta\|\nabla \log u\|^2-\p_t\log u\leq\frac{n+1}{2t}+\frac{(n-1)^2}{4(1-\beta)}
\end{equation}
for any $\beta\in [0,1)$ and $u\in \mathcal P(M,g)$;
\end{enumerate}
\end{cor}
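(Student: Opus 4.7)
My plan is to deduce the corollary from Theorem \ref{thm-general-h} by a purely algebraic linearization, without invoking Theorem \ref{thm-general} again: Theorem \ref{thm-general-h} already holds for all $u\in\mathcal{P}(M,g)$, and the corollary is a pointwise consequence of it. The strategy is to square the nonlinear estimate $\|\nabla\log u\|\leq \sqrt{A}+\frac{n-1}{2}$ (with $A=\p_t\log u+\frac{n}{2t}+\frac{(n-1)^2}{4}$ in the odd case, or with $\frac{n}{2t}$ replaced by $\frac{n+1}{2t}$ in the even case) and eliminate the resulting cross term $2\cdot\frac{n-1}{2}\cdot\sqrt{A}$ by Young's inequality, choosing the parameter so as to extract an arbitrary $\beta\in[0,1)$.

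Concretely, set $c=\frac{n-1}{2}$. Squaring Theorem \ref{thm-general-h} gives $\|\nabla\log u\|^2\leq A+2c\sqrt{A}+c^2$. Multiplying by $\beta$ and applying the AM-GM bound
\begin{equation*}
2\beta c\sqrt{A}\leq (1-\beta)A+\frac{\beta^2 c^2}{1-\beta},
\end{equation*}
which is just $\bigl(\sqrt{(1-\beta)A}-\frac{\beta c}{\sqrt{1-\beta}}\bigr)^2\geq 0$, we reach
\begin{equation*}
\beta\|\nabla\log u\|^2\leq A+\frac{\beta c^2}{1-\beta}+\beta c^2-\beta c^2 = A+\frac{(n-1)^2\beta}{4(1-\beta)}.
\end{equation*}
Moving $\p_t\log u$ to the left and observing that $\frac{(n-1)^2}{4}+\frac{(n-1)^2\beta}{4(1-\beta)}=\frac{(n-1)^2}{4(1-\beta)}$ yields exactly the stated estimate, with $\frac{n}{2t}$ in the odd case and $\frac{n+1}{2t}$ in the even case.

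Because the two parts of Theorem \ref{thm-general-h} differ only in whether one reads $\frac{n}{2t}$ or $\frac{n+1}{2t}$, the identical argument handles both parities of $n$. I do not expect a substantive obstacle: the entire content is the elementary fact that $\beta(\sqrt{A}+c)^2$ is dominated by an affine function of $A$ of slope $1$, and the optimal intercept $\frac{\beta c^2}{1-\beta}$ is precisely what generates the denominator $1-\beta$ appearing in the corollary. The only conceptual point worth emphasizing is that the sharpness of this Young-type step is independent of the sharpness of the underlying Theorem \ref{thm-general-h}, so this linearization records the best constant one can obtain from the nonlinear estimate by purely algebraic means.
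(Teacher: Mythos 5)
Your proof is correct and is essentially the paper's own argument: the paper also deduces the corollary pointwise from Theorem \ref{thm-general-h} by maximizing the concave quadratic $\beta(X+\tfrac{n-1}{2})^2-X^2$ in $X=\sqrt{A}$ via completing the square, which is exactly your Young's-inequality step on the cross term $2\beta c\sqrt{A}$ in disguise. No gap; the two computations coincide.
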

\begin{proof}
We only need to prove the odd dimensional case. The proof of the even dimensional case is similar.

By \eqref{eq-odd-2}, we have
\begin{equation}
\begin{split}
&\beta\|\nabla \log u\|^2-\p_t\log u\\
\leq&\beta\left(\sqrt{\p_t\log u +\frac{n}{2t}+\frac{(n-1)^2}{4}}+\frac{n-1}{2}\right)^2-\p_t\log u\\
=&\beta\left(X+\frac{n-1}{2}\right)^2-X^2+\frac{n}{2t}+\frac{(n-1)^2}{4}\\
=&-(1-\beta)\left(X-\frac{(n-1)\beta}{2(1-\beta)}\right)^2+\frac{n}{2t}+\frac{(n-1)^2}{4(1-\beta)}\\
\leq&\frac{n}{2t}+\frac{(n-1)^2}{4(1-\beta)}.\\
\end{split}
\end{equation}Here $X=\sqrt{\p_t\log u +\frac{n}{2t}+\frac{(n-1)^2}{4}}$. This completes the proof of the odd dimensional case.
\end{proof}
Moreover, we can obtain the Harnack inequalities in \cite{YZ} by using Theorem \ref{thm-general-h} directly via a classical argument in \cite{LY}.
\begin{cor}
Let $(M^n,g)$ be a complete Riemannian manifold with constant sectional curvature $-1$. Then,
\begin{enumerate}
\item when $n$ is odd,
\begin{equation}
u(t_1,x_1)\leq \left(\frac{t_2}{t_1}\right)^\frac n2 \exp\left({\frac{r^2(x_1,x_2)}{4(t_2-t_1)}+\frac{(n-1)^2}{4}(t_2-t_1)+\frac{n-1}{2}r(x_1,x_2)}\right)u(t_2,x_2)
\end{equation}
for any $0<t_1<t_2$ and $x_1,x_2\in M$;
\item when $n$ is even,
\begin{equation}
u(t_1,x_1)\leq \left(\frac{t_2}{t_1}\right)^\frac {n+1}2 \exp\left({\frac{r^2(x_1,x_2)}{4(t_2-t_1)}+\frac{(n-1)^2}{4}(t_2-t_1)+\frac{n-1}{2}r(x_1,x_2)}\right)u(t_2,x_2)
\end{equation}
for any $0<t_1<t_2$ and $x_1,x_2\in M$.
\end{enumerate}
Here $r(x_1,x_2)$ means the distance between $x_1$ and $x_2$, and $u\in \mathcal P(M,g)$.
\end{cor}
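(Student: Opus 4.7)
The plan is to derive both Harnack inequalities from Theorem~\ref{thm-general-h} by the classical Li-Yau space-time integration argument of~\cite{LY}. Fixing $0<t_1<t_2$, $x_1,x_2\in M$, and writing $r=r(x_1,x_2)$, I would take a minimizing geodesic $\gamma:[0,1]\to M$ from $x_2$ to $x_1$ with $|\dot\gamma|\equiv r$, set $t(s)=(1-s)t_2+st_1$, and integrate along the space-time path $s\mapsto(t(s),\gamma(s))$. Writing $f=\log u$, the fundamental theorem of calculus gives
\begin{equation*}
\log u(t_1,x_1)-\log u(t_2,x_2)=\int_0^1\Bigl[(t_1-t_2)\,f_t(t(s),\gamma(s))+\langle\nabla f,\dot\gamma(s)\rangle\Bigr]\,ds.
\end{equation*}

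The next step is to insert the odd-dimensional gradient estimate into the integrand. I would set $a:=f_t+\tfrac{n}{2t}+\tfrac{(n-1)^2}{4}$, which is nonnegative by~\eqref{eq-odd-1} and satisfies $\|\nabla f\|\leq\sqrt{a}+\tfrac{n-1}{2}$ by~\eqref{eq-odd-2}. Using Cauchy-Schwarz on $\langle\nabla f,\dot\gamma\rangle$ and replacing $f_t$ by $a-\tfrac{n}{2t}-\tfrac{(n-1)^2}{4}$, the integrand is bounded above by the quadratic
\begin{equation*}
-(t_2-t_1)w^2+rw+(t_2-t_1)\frac{n}{2t(s)}+(t_2-t_1)\frac{(n-1)^2}{4}+\frac{n-1}{2}r
\end{equation*}
in $w:=\sqrt{a}\geq 0$. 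A one-variable optimization at $w^*=r/(2(t_2-t_1))$ produces the maximum $\tfrac{r^2}{4(t_2-t_1)}$ for the first two terms; the remaining three terms depend on $s$ only through $1/t(s)$.

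Finally, I would integrate the resulting pointwise bound over $s\in[0,1]$. The only nontrivial computation is $\int_0^1 ds/t(s)=\log(t_2/t_1)/(t_2-t_1)$, which converts the $\tfrac{n}{2t(s)}$ contribution into $\tfrac{n}{2}\log(t_2/t_1)$; the other three terms are $s$-independent and pass through intact. Exponentiating yields the odd-dimensional Harnack inequality. For the even case, one simply substitutes~\eqref{eq-even-2} for~\eqref{eq-odd-2}, which replaces $\tfrac{n}{2t}$ by $\tfrac{n+1}{2t}$ in $a$ and therefore changes the exponent $n/2$ to $(n+1)/2$ while leaving every other term in the exponential unchanged. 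There is no genuine obstacle here: the only point requiring bookkeeping is the additive constant $\tfrac{n-1}{2}$ in the gradient estimate, which is responsible for the linear term $\tfrac{n-1}{2}r$ in the exponent but does not disturb the concavity of the quadratic maximization in $w$.
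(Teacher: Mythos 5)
Your proposal is correct and follows essentially the same route as the paper: integrate $\log u$ along a space-time geodesic path, insert the gradient estimate \eqref{eq-odd-2} (resp. \eqref{eq-even-2}) via Cauchy--Schwarz, and maximize the resulting concave quadratic in $\sqrt{\p_t\log u+\frac{n}{2t}+\frac{(n-1)^2}{4}}$, the only difference being the cosmetic choice of parametrizing by $s\in[0,1]$ rather than by $t\in[t_1,t_2]$.
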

\begin{proof}
We only need to prove the odd dimensional case. The proof of the even dimesional case is similar.

By \eqref{eq-odd-2},
\begin{equation}
\begin{split}
&\log u(t_2,x_2)-\log u(t_1,x_1)\\
=&\int_{t_1}^{t_2}d\log u(t,\gamma(t))\\
=&\int_{t_1}^{t_2}(\p_t\log u(t,\gamma(t))+\vv<\nabla\log u(t,\gamma(t)),\gamma'(t)>)dt\\
\geq&\int_{t_1}^{t_2}(\p_t\log u(t,\gamma(t))-\|\nabla\log u(t,\gamma(t))\|\|\gamma'(t)\|)dt\\
\geq&\int_{t_1}^{t_2}\left(\p_t\log u(t,\gamma(t))-\left(\sqrt{\p_t\log u(t,\gamma(t)) +\frac{n}{2t}+\frac{(n-1)^2}{4}}+\frac{n-1}{2}\right)\|\gamma'(t)\|\right)dt\\
=&\int_{t_1}^{t_2}\left(-\frac{n}{2t}-\frac{(n-1)^2}{4}+X^2-\frac{r(x_1,x_2)}{t_2-t_1}\left(X+\frac{n-1}{2}\right)\right)dt\\
\geq&-\frac{n}{2}\log\left(\frac{t_2}{t_1}\right)-\frac{(n-1)^2}{4}(t_2-t_1)-\frac{n-1}{2}r(x_1,x_2)-\frac{r^2(x_1,x_2)}{4(t_2-t_1)}.
\end{split}
\end{equation}
Here $\gamma$ is a minimal geodesic  of $(M,g)$ with $\gamma(t_1)=x_1$ and $\gamma(t_2)=x_2$, and $X=\sqrt{\p_t\log u(t,\gamma(t)) +\frac{n}{2t}+\frac{(n-1)^2}{4}}$. This completes the proof the odd dimensional case.
\end{proof}

\end{document}